\newtheorem{satz}{Theorem}
\newtheorem{proposition}[satz]{Proposition}
\newtheorem{theorem}[satz]{Theorem}
\newtheorem{lemma}[satz]{Lemma}
\newtheorem{corollary}[satz]{Corollary}
\newtheorem{remark}[satz]{Remark}
\def\F{\mathbb {F}}
\def\E{\mathsf{E}}
\def\a{\alpha}
\def\C{\mathbb{C}}
\def\P{{\cal P}}
\def\({\big (}
\def\){\big )}
\def\G{\Gamma}
\def\le{\leqslant}
\def\ge{\geqslant}
\def\_phi{\varphi}
\def\eps{\varepsilon}
\def\Gr{{\mathbf G}}
\def\t{\tilde}
\def\D{\Delta}
\def\T{\mathsf{T}}
\def\C{\mathbb{C}}
\author{Olmezov K.I., Semchankau A.S., Shkredov I.D.}
\title{On popular sums and differences of sets with small products 
\footnote{This work is supported by the Russian Science Foundation under grant 19--11--00001.}
}
\date{}
\begin{document}
	\maketitle


\begin{center}
	Annotation.
\end{center}

{\it \small
	Given a subset of real numbers  $A$ with small product $AA$ we obtain a new upper bound for the additive energy of $A$.
	The proof uses a natural 
	observation that level sets of convolutions of the characteristic function of $A$ have small product with $A$.  
}
\\

\section{Introduction}
\label{sec:introduction}

Let $p$ be a prime number, $\F_p$ be the finite field, and let $\G \subseteq \F_p \setminus \{0\}$ be a multiplicative subgroup. 
The question about additive properties of such subgroups is a classical one, see, e.g., \cite{Bourgain_more}, \cite{Bourgain_DH}, \cite{K_Tula}, \cite{KS1} and many other papers. 
The discussed  question is naturally connected with the sum--product phenomenon, see, e.g, \cite{Elekes2}, \cite{TV} and recent papers \cite{RSS}, \cite{Shakan}. 
In many papers, see \cite{Bourgain_more}, \cite{BCh}, \cite{s_as} and others,  authors extensively exploit the fact that the sumsets and the difference sets of $\G$ 
are also $\G$--invariant sets, that is can be expressed as a disjoint union of some cosets over $\G$. Moreover, some more difficult functions as convolutions of $\G$, its level--sets and 
many others 
enjoy  this property as well.
The aim of this paper is to discuss what can be done in this direction in the real setting. 
One of our results says 
that if $A\subseteq \mathbb{R}$  be a set with small product, then any level set $P$ of its convolutions is almost invariant under multiplication by $A$ (the exact formulation can be found in Section \ref{sec:small_AA}).  
Notice that the first results in this direction were obtained in \cite{MRSS}, \cite{sh_AA_small}, \cite{s_as}. 
We apply the described observation to find a 
new bound 
for 
the additive energy of subsets in  $\mathbb{R}$  with small product set.

Let us recall quickly what was done before concerning the additive energy of such sets.
In \cite[Theorem 3]{MRSS}, developing a series of previous results (see, e.g., \cite{SS}, \cite{s_mixed}),  it was proved 

\begin{theorem}
	Let $A\subset \mathbb{R}$ be a finite set such that $|AA| \le M|A|$. 
	Then 
	\begin{equation}\label{f:main_intr_49/20}	
	\E^{+} (A) := |\{ (a,b,c,d) \in A^4 ~:~ a+b=c+d \}| \lesssim  M^{\frac{8}{5}} |A|^{\frac{49}{20}} \,.
	\end{equation}
	\label{t:main_intr-}
\end{theorem}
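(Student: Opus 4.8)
The plan is to bound the additive energy $\E^{+}(A)$ by exploiting the small product set $|AA| \le M|A|$ through a multiplicative-to-additive transfer, combined with the observation (promised in Section~\ref{sec:small_AA}) that level sets of convolutions of $A$ are almost multiplicatively invariant under $A$. Let me sketch how to prove Theorem~\ref{t:main_intr-}.

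First I would set up the standard machinery. Write $f(x) = (1_A * 1_A)(x)$ for the additive convolution, so that $\E^{+}(A) = \sum_x f(x)^2 = \sum_x (1_A \circ 1_A)(x)^2$ counting representations; here $\E^{+}(A)$ is controlled by the $\ell^2$-mass of the representation function. The standard approach is dyadic: partition the difference set according to the size of the representation function, defining level sets
\begin{equation}\label{f:levelset}
P_\tau = \{ x ~:~ \tau \le (1_A \circ 1_A)(x) < 2\tau \}\,,
\end{equation}
so that $\E^{+}(A) \lesssim \sum_{\tau \text{ dyadic}} \tau^2 |P_\tau|$, and by pigeonholing it suffices to bound $\tau^2 |P_\tau|$ for a single popular level $\tau$. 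The key structural input is that each such $P_\tau$ has small product with $A$: since $x \in P_\tau$ means $x$ has $\gtrsim \tau$ representations as a difference of elements of $A$, one shows $|A P_\tau|$ or equivalently $|A \cdot P_\tau|$ is controlled, because multiplying the representation $x = a - a'$ relates $P_\tau$ to dilates sitting inside a bounded number of product sets. This is precisely the ``almost $A$-invariance'' observation advertised in the introduction.

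Next I would convert the multiplicative control into an additive-combinatorial count. Having $|AP_\tau|$ small, one invokes a sum-product-type estimate — specifically a bound on a mixed energy or on incidences between the set $A$ and the level set $P_\tau$ — to extract an upper bound on $\tau |P_\tau|$ or $\tau^2 |P_\tau|$ in terms of $|A|$ and $M$. The natural tool is an Elekes–Szemerédi / Solymosi-type incidence bound (or equivalently the third-moment energy estimates of \cite{SS}, \cite{s_mixed}), applied to points $(a, x)$ with the product $ax$ or the difference structure constraining incidences with lines. Tracking the exponents carefully through the incidence bound is where the specific numbers $M^{8/5}$ and $|A|^{49/20}$ emerge: the $49/20$ exponent strongly suggests an optimization that balances two competing estimates (a trivial bound $\tau |P_\tau| \le |A|^2$ against the incidence-derived bound), resolved via a Hölder or Cauchy–Schwarz step.

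The main obstacle, I expect, is the multiplicative-invariance step and its clean quantification: proving that the level set $P_\tau$ genuinely has small product with $A$ with the right dependence on $\tau$, and then feeding this into the incidence machinery without losing factors. In the finite-field subgroup setting one gets exact coset-invariance for free, but in $\mathbb{R}$ one only has approximate invariance, so the hard part is controlling the error terms and ensuring the product-set bound $|AP_\tau| \lesssim M^{?}|A|^{?}/\tau^{?}$ interacts correctly with the subsequent sum-product estimate. The final optimization over the free parameter (the dyadic scale $\tau$, balanced against the size of $P_\tau$) should then yield the stated exponents, but getting the precise constants $8/5$ and $49/20$ will require a delicate Hölder balancing that is the technical heart of the argument.
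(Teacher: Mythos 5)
First, a point of orientation: this paper never proves Theorem \ref{t:main_intr-} at all --- it is quoted from \cite[Theorem 3]{MRSS} as prior work, and the paper's own contribution (Theorem \ref{t:main_energy}) establishes the different bound $\E^{+}(A) \lesssim M^{7/3}|A|^{22/9}$ by combining the level--set observation of Section \ref{sec:small_AA} with the eigenvalues method (Lemma \ref{l:eigenvalues}). So your attempt has to be judged both against that argument and against what your outline could actually deliver on its own.

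Judged that way, there is a genuine gap: your proposal is a plan, not a proof, and the steps you defer (``tracking the exponents carefully,'' ``a delicate H\"older balancing that is the technical heart'') are exactly where all the mathematical content lies. Moreover, the toolbox you actually name --- dyadic level sets $P_\tau$ of the convolution, smallness of $|AP_\tau|$, and an Elekes/Szemer\'edi--Trotter-type input, balanced against the trivial bound $|P_\tau| \le |A|^2/\tau$ --- delivers, when used in the straightforward way you describe, only the classical exponent $5/2$: the incidence input here is $\E^{+}_3(A) \ll M^2|A|^3\log|A|$ (Lemma \ref{l:E_3}), i.e.\ $|P_\tau| \lesssim M^2|A|^3/\tau^3$, and balancing $\tau^2\min\bigl(|A|^2/\tau,\, M^2|A|^3/\tau^3\bigr)$ over $\tau$ gives $\E^{+}(A) \lesssim M|A|^{5/2}$, which is precisely the bound that \cite{MRSS} and this paper set out to beat. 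To get below $5/2$ the almost-invariance of $P_\tau$ must enter the energy estimate quantitatively, and your sketch never says how it does. In the paper this happens through two concrete steps you are missing: (i) the covering/norm-property argument of Section \ref{sec:small_AA}, giving $|PA| \lesssim_{2^k} M^{6k}|P|$ for a popular level set, which upgrades to Corollary \ref{c:P_energy}, i.e.\ $P$ is itself a Szemer\'edi--Trotter set with $\E^{+}_3(P) \lesssim_{2^k} M^{12k}|P|^4/|A| + |P|^3$; and (ii) the eigenvalue inequality of Lemma \ref{l:eigenvalues}, which couples $\sigma_P(A)$, $\E^{+}(A)$, $\E^{+}_3(A)$ and $\sum_z r^2_{A-A}(z)\, r_{P-P}(z)$, so that after H\"older the quantity $\E^{+}_3(P)$ --- and with it the invariance property --- genuinely enters the bound for $\E^{+}(A)$. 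Without an inequality of type (ii) (or whatever distinct machinery \cite{MRSS} uses to reach $49/20$), knowing that $|AP_\tau|$ is small has no route into $\E^{+}(A)$, and no amount of balancing over $\tau$ will produce the exponent $49/20$, let alone the paper's $22/9$.
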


Using a combinatorial idea (see Section \ref{sec:small_AA}) as well as the eigenvalues method, we improve the last result to (consider the simplest case $M=1$)

\begin{theorem}
	Let $A\subset \mathbb{R}$ be a finite set such that $|AA| \ll |A|$. 
	Then 
\begin{equation}\label{f:main_intr}	
	\E^{+} (A) \lesssim 
	|A|^{\frac{22}{9}} \,.
\end{equation}
\label{t:main_intr}
\end{theorem}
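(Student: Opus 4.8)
The plan is to combine the operator (eigenvalue) method with the new combinatorial input on level sets. Write $r_{A-A}(x)=|\{(a,b)\in A^2:\, a-b=x\}|$, so that $\E^{+}(A)=\sum_x r_{A-A}(x)^2$. The engine of the proof is the correlation operator $\T$ on $\ell^2(A)$ with entries $\T[a,a']=r_{A-A}(a-a')$. Since $\T$ is the Gram matrix of the translates $\{\mathbf 1_{a+A}\}_{a\in A}$, it is symmetric and positive semidefinite; its eigenvalues $\mu_1\ge\mu_2\ge\cdots\ge 0$ satisfy $\sum_i\mu_i=\tr\T=|A|^2$ and $\sum_i\mu_i^2=\tr\T^2=\E_3^{+}(A):=\sum_x r_{A-A}(x)^3$. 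Testing the quadratic form against $\mathbf 1_A$ gives $\mu_1\ge \mathbf 1_A^{\mathsf T}\T\,\mathbf 1_A/|A|=\E^{+}(A)/|A|$, so that
\[
\E^{+}(A)\ \le\ |A|\,\mu_1 .
\]
Thus everything reduces to an upper bound for the top of the spectrum of $\T$, which the eigenvalue method accesses through the traces $\tr\T^{k}$ via $\mu_1\le(\tr\T^{k})^{1/k}$ — genuinely stronger than the elementary $\mu_1\le(\tr\T^{2})^{1/2}=\E_3^{+}(A)^{1/2}$.

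Next I would bring in the combinatorial observation. Decomposing the differences dyadically, I fix a popular level set $P=P_\Delta=\{x:\, r_{A-A}(x)\ge\Delta\}$, so that the contribution of $P$ to the relevant traces is governed by $\Delta$ and $|P|$, subject to the $L^1$ bound $\Delta|P|\le|A|^2$. The key point is that $P$ has small product with $A$: if $x=a-b\in P$ and $\lambda\in A$, then $\lambda x=\lambda a-\lambda b\in AA-AA$, whence $P\cdot A\subseteq AA-AA$ and therefore $|P\cdot A|\le|AA|^2\ll|A|^2$. In other words, the popular level sets, although defined purely additively, are rigid multiplicatively.

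With this in hand I would estimate $\mu_1$ (equivalently the traces $\tr\T^{k}$) by the Szemer\'edi--Trotter theorem. Bounding $\mu_1$ amounts to controlling a weighted incidence count between $A$ and the popular set $P$, and the multiplicative smallness $|P\cdot A|\ll|A|^2$ is exactly what lets one realise these incidences as point--line incidences in the plane (for instance with points drawn from $A\times(AA-AA)$ and the lines $Y=\lambda X-\lambda b$, $\lambda,b\in A$), to which Szemer\'edi--Trotter applies. Feeding the resulting estimate into $\mu_1\le(\tr\T^{k})^{1/k}$ and optimising over the auxiliary parameters — the dyadic level $\Delta$, the size $|P|$, and the power $k$ — should yield $\mu_1\lesssim|A|^{13/9}$, and hence $\E^{+}(A)\le|A|\,\mu_1\lesssim|A|^{22/9}$ as claimed (the doubling $M$ re-entering through the factor $|AA|^2$ in the general case).

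I expect the main obstacle to be precisely this last quantitative step: extracting the exponent $13/9$ for $\mu_1$ from the incidence bound. Both constraints on $P$ — the additive $L^1$ bound $\Delta|P|\le|A|^2$ and the multiplicative bound $|P\cdot A|\ll|A|^2$ — must be used \emph{simultaneously} and balanced against the Szemer\'edi--Trotter estimate, and it is the sharp interplay of these (rather than any single inequality) that separates the target $22/9$ from the previously known $49/20$. Keeping track of the correct power $k$ in the eigenvalue step, and verifying that the incidence configuration is genuinely non-degenerate so that Szemer\'edi--Trotter is not dominated by its trivial terms, are the delicate points.
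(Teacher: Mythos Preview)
Your sketch assembles the right ingredients --- dyadic level sets $P$, the eigenvalue/operator method, the multiplicative rigidity of $P$, and Szemer\'edi--Trotter --- and this is indeed the architecture of the paper's proof. However, several of the specific mechanisms you propose are not the ones that actually deliver the exponent $22/9$, and as you yourself flag, the decisive quantitative step is absent.

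First, the eigenvalue input is not the crude bound $\mu_1\le(\tr\T^{k})^{1/k}$. The paper instead uses a sharper inequality (its Lemma~\ref{l:eigenvalues}, formula~\eqref{f:eigenvalues}): for a dyadic level set $P$ with $\Delta<r_{A-A}\le 2\Delta$ on $P$,
\[
\left(\frac{\sigma_P(A)^2\,\E^{+}(A)}{|A|^3}\right)^2 \lesssim \E_3^{+}(A)\cdot \sum_{z} r_{A-A}^2(z)\,r_{P-P}(z)\,.
\]
One then applies H\"older to the right-hand sum to isolate $\E_3^{+}(P)$, obtaining
\[
\left(\frac{\Delta^2|P|^2\,\E^{+}(A)}{|A|^3}\right)^6 \lesssim (\E_3^{+}(A))^5\,\E_3^{+}(P)\,.
\]
This is where the level set enters the spectral estimate; it does not appear naturally in $\tr\T^{k}$.

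Second, the multiplicative bound you record, $|PA|\le|AA-AA|\le|AA|^2\ll|A|^2$, is too weak. The paper needs the stronger $|PA|\lesssim|P|$ (its~\eqref{f:popular_PA} with $k=2$), proved by combining the elementary observation $r_{\Pi-\Pi}(x)\ge\Delta$ on $PA$ with an energy--norm argument for $\E_k^{+}(AA)$. Only with $|PA|\lesssim|P|$ does the Szemer\'edi--Trotter step (Corollary~\ref{c:P_energy}) yield $\E_3^{+}(P)\lesssim |P|^4/|A|$, which is what the computation requires.

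Third, you are missing one constraint that is essential for closing the bound: from $\E_3^{+}(A)\lesssim|A|^3$ one has $\Delta\ll|A|^3/\E^{+}(A)$. Combining this with $\Delta^2|P|\gtrsim\E^{+}(A)$, $\E_3^{+}(A)\lesssim|A|^3$, and $\E_3^{+}(P)\lesssim|P|^4/|A|$ in the display above gives $\E^{+}(A)^{18}\lesssim|A|^{44}$, i.e.\ the claimed $22/9$. Your proposed incidence configuration (points in $A\times(AA-AA)$) is not the one used; the paper takes points in $A^{-1}\times S_\tau$ and lines indexed by $PA\times B$, which is where $|PA|\lesssim|P|$ is spent.
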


Thus for any set $A$ with $|AA|\ll |A|$ we obtain the exponent $\frac{22}{9}$ which is better then in Theorem \ref{t:main_intr-}. 

Also, in \cite{sh_AA_small} and in \cite[Theorems 2, 10]{MRSS} the following result was proved (it is parallel to results from  \cite{SV} for multiplicative subgroups in $\F^*_p$).

\begin{theorem}
	Let $A\subset \mathbb{R}$ be a finite set such that $|AA| \ll |A|$. 
Then 
\begin{equation}\label{f:main_intr'}	
	|A-A| \gtrsim |A|^{\frac{5}{3}} \quad \quad \mbox{and} \quad \quad |A+A| \gtrsim |A|^{\frac{8}{5}} \,.
\end{equation}
\label{t:main_intr'}	
\end{theorem}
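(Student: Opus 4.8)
The plan is to deduce both inequalities from upper bounds on the \emph{third} additive energies $\E_3^{\pm}(A)=\sum_x r_{A\pm A}(x)^3$, where $r_{A\pm A}(x)=|\{(a,b)\in A^2: a\pm b=x\}|$. Since $\sum_x r_{A\pm A}(x)=|A|^2$ and $r_{A\pm A}$ is supported on $A\pm A$, Hölder's inequality (with exponents $3$ and $3/2$) yields
\[
|A\pm A|\;\ge\;\frac{|A|^3}{\E_3^{\pm}(A)^{1/2}}\,.
\]
Hence it suffices to prove $\E_3^-(A)\lesssim|A|^{8/3}$ (giving $|A-A|\gtrsim|A|^{5/3}$) and $\E_3^+(A)\lesssim|A|^{14/5}$ (giving $|A+A|\gtrsim|A|^{8/5}$). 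Note that the cruder Cauchy--Schwarz bound $|A-A|\ge|A|^4/\E^{+}(A)$ combined with $\E^{+}(A)\lesssim|A|^{22/9}$ from Theorem~\ref{t:main_intr} only gives $|A-A|\gtrsim|A|^{14/9}$, so passing to the third energy is genuinely necessary.

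The heart of the argument is the incidence-geometric estimate for $\E_3^{\pm}(A)$ exploiting $|AA|\ll|A|$. First I would record the structural consequences of small product: by Plünnecke--Ruzsa one has $|A/A|\lesssim|A|$, and by Cauchy--Schwarz the multiplicative energy satisfies $\E^{\times}(A)\ge|A|^4/|AA|\gtrsim|A|^3$. Thus $A$ behaves like a multiplicative near-subgroup, in that many dilates $A\cap\lambda A$ (with $\lambda\in A/A$) are large; this is exactly the feature making the real problem parallel to the case of multiplicative subgroups of $\F_p^{*}$ treated in \cite{SV}. I would then control the third energy spectrally (the eigenvalue method): $\E_3^{\pm}(A)$ is governed by the largest eigenvalues of the convolution operator attached to $A$, and these can be bounded by counting incidences between the point set $(A\pm A)\times(AA)$ and the family of lines $y=\lambda(x-\mu)$ with $\lambda,\mu\in A$ (with the obvious sign change for differences), each of which meets the point set in $\gtrsim|A|$ points. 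Feeding the Szemerédi--Trotter theorem the bounds $|AA|,|A/A|\lesssim|A|$ then produces the required estimates.

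The main obstacle is obtaining the exponents exactly, and in particular explaining the asymmetry between $8/3$ for differences and $14/5$ for sums. The naive Elekes incidence count ($|A|^{3}$ incidences among $\le|A|^2$ lines) only yields $|A\pm A||AA|\gtrsim|A|^{5/2}$, i.e. the weaker exponent $3/2$; the improvement comes from using the full multiplicative energy $\E^{\times}(A)$ rather than merely the cardinality $|AA|$, together with a dyadic decomposition of the dilates $A\cap\lambda A$ by size and a bootstrapping of the resulting incidence inequality. The difference set admits a slightly sharper count than the sumset, because differences of the popular level sets overlap more favourably, which is what upgrades $3/2$ to $5/3$ in the difference case while the sum case stops at $8/5$. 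I expect the careful optimisation of these incidence inequalities, and the verification that the extremal regime of Szemerédi--Trotter is the relevant one, to be the most delicate part of the proof.
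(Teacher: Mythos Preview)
Your reduction has a fatal gap: the bound $\E_3^{-}(A)\lesssim |A|^{8/3}$ that you say ``suffices'' is simply false. Since $r_{A-A}(0)=|A|$, the single term $x=0$ already gives
\[
\E_3^{-}(A)=\sum_x r_{A-A}(x)^3 \ge r_{A-A}(0)^3=|A|^3,
\]
so no estimate below $|A|^{3}$ is possible. Consequently your H\"older inequality $|A-A|\ge |A|^{3}/\E_3^{-}(A)^{1/2}$ can never yield more than $|A-A|\gtrsim|A|^{3/2}$; the exponent $5/3$ is out of reach by this route. (Lemma~\ref{l:E_3} in the paper says exactly that $\E_3^{+}(A)\lesssim |A|^{3}$ and this is essentially sharp.) Even if you excise $x=0$, obtaining $\sum_{x\ne 0} r_{A-A}(x)^{3}\lesssim|A|^{8/3}$ would be a substantially stronger statement than anything the paper proves or cites, and your sketch (``feed Szemer\'edi--Trotter the bounds $|AA|,|A/A|\lesssim|A|$'') does not indicate how to get there; the standard incidence count produces only the exponent $3$. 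The same objection applies, mutatis mutandis, to your target $\sum_x r_{A+A}(x)^{3}\lesssim|A|^{14/5}$: via Cauchy--Schwarz this would imply $\E^{+}(A)\lesssim|A|^{12/5}$, strictly stronger than Theorem~\ref{t:main_intr}.

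The paper's argument is genuinely different. It does \emph{not} try to improve $\E_3^{+}(A)$ below $|A|^3$. Instead it takes a popular level set $P\subseteq A-A$, uses the eigenvalue inequality \eqref{f:eigenvalues1.5} of Lemma~\ref{l:eigenvalues} to get
\[
|A|^{18}\lesssim (\E_3^{+}(A))^{3}\,\E_3^{+}(P)\,|A-A|^{2},
\]
and then exploits the new combinatorial observation of Section~\ref{sec:small_AA}: because $P$ is popular, $|PA|\lesssim|P|$ (see \eqref{f:popular_PA_1}), whence Corollary~\ref{c:P_energy} gives $\E_3^{+}(P)\lesssim |P|^{4}/|A|$. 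Plugging in $\E_3^{+}(A)\lesssim|A|^{3}$ and $|P|\le|A-A|$ closes the loop to $|A|^{10}\lesssim|A-A|^{6}$. The sumset bound is obtained analogously via an inequality from \cite{sh_SzT}. The point you are missing is that the gain comes from bounding $\E_3^{+}$ of the \emph{level set} $P$, not of $A$ itself.
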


Here again we obtain  Theorem \ref{t:main_intr'} directly by the eigenvalues  method and applying our new combinatorial idea.

General discussion, partial results and open questions are contained in the  last Section \ref{sec:general_problem}.

We thank T. Schoen for  useful discussions.


\section{Definitions}
\label{sec:definitions}

Let $\Gr$ be an abelian group.
In this paper we use the same letter to denote a set $S\subseteq \Gr$
and its characteristic function $S:\Gr \rightarrow \{0,1\}.$
By $|S|$ we denote cardinality of $S$.
Given two sets $A,B\subset \Gr$, define  the \textit{product set} (the {\it sumset} in the abelian case) of $A$ and $B$ as 
$$AB:=\{ab ~:~ a\in{A},\,b\in{B}\}\,.$$
In a similar way we define the higher product sets, e.g., $A^3$ is $AAA$. 
If $\Gr$ is an abelian group, then the Pl\"unnecke--Ruzsa inequality (see, e.g., \cite{TV}) takes place
$$
	|nA-mA| \le \left( \frac{|A+A|}{|A|} \right)^{n+m} \cdot |A| \,.
$$
Let $f,g : \Gr \to \C$ be two functions.
Put
\begin{equation}\label{f:convolutions}
(f*g) (x) := \sum_{y\in \Gr} f(y) g(x-y) \quad \mbox{ and } \quad
(f\circ g) (x) := \sum_{y\in \Gr} f(y) g(y+x) \,.
\end{equation}
Denote by 
$\E^{+}(A,B)$ the {\it additive energy} of two sets $A,B \subseteq \Gr$
(see e.g. \cite{TV}), that is
$$
\E^{+} (A,B) = |\{ (a_1,a_2,b_1,b_2) \in A^2 \times B^2 ~:~ a_1+b_1 = a_2+b_2  \}| \,.
$$
If $A=B$ we simply write $\E^{+} (A)$ instead of $\E^{+} (A,A).$
Clearly,
\begin{equation*}\label{f:energy_convolution}
\E^{+} (A,B) = \sum_x (A*B) (x)^2 = \sum_x (A \circ B) (x)^2 = \sum_x (A \circ A) (x) (B \circ B) (x)
\,.
\end{equation*}
More generally (see, e.g., \cite{SS}, \cite{s_E_k}), for $k\ge 2$ put
$$
\E^{+}_k (A) = |\{ (a_1,\dots, a_k, a'_1, \dots, a'_k) \in A^{2k} ~:~ a_1-a'_1 = a_2 - a'_2 = \dots = a_k - a'_k  \}|
\,.
$$
Thus $\E^{+} (A) = \E^{+}_2 (A)$.
It is convenient to put $\E^{+}_1 (A) = |A|^2$.  
Having $A,P \subseteq \Gr$ let $\sigma_P (A) := \sum_{x\in P} (A\circ A) (x)$.
In the same way define the {\it multiplicative energy} of two sets $A,B \subseteq \Gr$
$$
\E^{\times} (A,B) = |\{ (a_1,a_2,b_1,b_2) \in A^2 \times B^2 ~:~ a_1 b_1 = a_2 b_2 \}|
$$
and, similarly, $\E^\times_k (A)$.
Certainly, the multiplicative energy $\E^{\times} (A,B)$ can be expressed in terms of multiplicative convolution 
as in
(\ref{f:convolutions}).
If it does not matter which energy we need, then let us  write just $\E(A)$, $\E_k(A)$ and so on. 
Also, sometimes we  use representation function notations like $r_{AB} (x)$, $r_{A+B} (x)$ or $r_{AB^{-1}} (x)$, which counts the number of ways $x \in \Gr$ can be expressed as a product $ab$ or as a sum $a+b$ or  $ab^{-1}$ with $a\in A$, $b\in B$, respectively. 
For example, $|A| = r_{AA^{-1}}(1)$ and  $\E (A,B) = r_{AA^{-1}BB^{-1}}(1) =\sum_x r^2_{A^{-1}B} (x)$. 


All logarithms are to base $2.$ The symbols  $\ll$ and $\gg$ are the usual Vinogradov's symbols, thus $a\ll b$ means $a=O(b)$ 
and $a\gg b$ is $b=O(a)$. 
If $K$ is a parameter, then $\ll_K$, $\gg_K$ 
indicates  
a polynomial dependence of constants in $\ll$ and $\gg$ on $K$. 
Having a fixed  set
$A$,
we write
$a \lesssim b$ or $b \gtrsim a$ if $a = O(b \cdot \log^c |A|)$, with an absolute constant  $c>0$.
For any given prime $p$ denote by $\F_p$ the finite prime field and let $\F$ be an arbitrary field no matter finite or not.  

\section{Preliminaries}
\label{sec:preliminaries}

The first lemma is a well--known consequence of the Szemer\'edi--Trotter Theorem (see below) and is contained  in, e.g., \cite[Corollary 28]{SS}. 
It says that a certain sort of energy of a set $A$ with small product set $AA$ can be estimated almost optimally.

\begin{lemma}
	Let $A\subset \mathbb{R}$ be a finite set such that $|AA| \le M|A|$.
	Then 
\[
	\E^{+}_3 (A) \ll M^2 |A|^3 \log |A| \,. 
\] 	
\label{l:E_3}
\end{lemma}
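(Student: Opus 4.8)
The plan is to rewrite the quantity as a third moment of the difference function and then to control the level sets of that function by the Szemer\'edi--Trotter theorem, arranging the incidence geometry so that the hypothesis $|AA|\le M|A|$ enters only through a point set of size $M|A|^2$. First I would record that
\[
\E^{+}_3 (A) = \sum_t (A\circ A)(t)^3 = \sum_t r_{A-A}(t)^3 \,,
\]
where $r_{A-A}(t) = |\{a' \in A : a'+t \in A\}|$. Splitting this sum dyadically over $P_\Delta := \{ t : \Delta \le r_{A-A}(t) < 2\Delta \}$ with $\Delta$ ranging over powers of $2$ in $[1,|A|]$, and treating the contribution of $\Delta \le 2$ trivially by $\sum_t r_{A-A}(t) = |A|^2$, it suffices to prove the single--scale estimate $\Delta^3 |P_\Delta| \ll M^2 |A|^3$ for each dyadic $\Delta \ge 2$. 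Indeed, since there are only $O(\log|A|)$ such scales, summing yields the claimed bound with the $\log|A|$ factor.

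The heart of the argument is the choice of the point--line configuration. I would take the point set $\mathcal{P} = A \times AA \subseteq \mathbb{R}^2$, of size $|\mathcal{P}| = |A|\,|AA| \le M|A|^2$ (discarding $0$ from $A$, which is harmless). For each $b \in A$ and each $t \in P_\Delta$ consider the line $\ell_{b,t} : y = b(x+t)$. The key observation is that if $a' \in A$ satisfies $a' + t \in A$, then the point $(a',\, b(a'+t)) = (a',\, b a)$ with $a = a'+t \in A$ lies in $A \times AA$ and on $\ell_{b,t}$; as $a'$ runs over the $\ge \Delta$ admissible values, these points are distinct (distinct $x$--coordinates), so every line $\ell_{b,t}$ carries at least $\Delta$ points of $\mathcal{P}$. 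Crucially, $\ell_{b,t}$ has slope $b$ and intercept $bt$, so the pair $(b,t)$ is recoverable from the line; hence the lines $\ell_{b,t}$ are pairwise distinct and their number is exactly $|A|\,|P_\Delta|$.

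It then remains to apply the standard rich--lines consequence of Szemer\'edi--Trotter: the number of lines incident to at least $\Delta$ points of $\mathcal{P}$ is $\ll |\mathcal{P}|^2/\Delta^3 + |\mathcal{P}|/\Delta$. Comparing with the $|A|\,|P_\Delta|$ lines produced above gives
\[
|A|\,|P_\Delta| \ll \frac{(M|A|^2)^2}{\Delta^3} + \frac{M|A|^2}{\Delta} = \frac{M^2|A|^4}{\Delta^3} + \frac{M|A|^2}{\Delta} \,,
\]
whence $|P_\Delta| \ll M^2|A|^3/\Delta^3 + M|A|/\Delta$ and therefore $\Delta^3|P_\Delta| \ll M^2|A|^3 + M|A|\Delta^2 \ll M^2|A|^3$, using $\Delta \le |A|$ and $M \ge 1$. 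Summing over the dyadic scales delivers $\E^{+}_3(A) \ll M^2|A|^3\log|A|$. I expect the main obstacle to be precisely the design of this configuration: a naive setup (for instance taking points in $AA\times AA$, or lines indexed by the product $P_\Delta \cdot A$) forces either a point set of size $M^2|A|^2$ or a collapse of distinct lines, and each defect costs an extra factor of $M$, yielding only $M^4|A|^4\log|A|$. Keeping the point set at $A\times AA$ while guaranteeing that the $|A|\,|P_\Delta|$ lines remain distinct is what makes the small--product hypothesis contribute only linearly in $M$ per coordinate, giving the sharp $M^2$.
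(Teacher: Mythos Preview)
Your argument is correct and is precisely the standard Szemer\'edi--Trotter proof the paper has in mind when it cites \cite[Corollary 28]{SS}; the paper itself does not reproduce a proof. Your point--line configuration $\mathcal{P}=A\times AA$ with lines $y=b(x+t)$ is the clean way to do it, and your treatment of the distinctness of lines (requiring $b\neq 0$), the rich--lines bound, and the secondary term $M|A|\Delta^2\le M^2|A|^3$ are all handled properly.
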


The next lemma is a partial case of the eigenvalues method (although inequalities \eqref{f:eigenvalues1.5}, \eqref{f:eigenvalues2} can be obtained directly using a purely combinatorial approach), see, e.g., \cite[Theorem 5.1, inequality (5.7)]{s_mixed}.

\begin{lemma}
	Let $\Gr$ be an abelian  group and $A\subset \Gr$ be a finite set.
	Then for an arbitrary set $P \subseteq A-A := D$ such that for any $x\in P$ one has $\D < (A\circ A) (x) \le 2\D$
	the following holds 
\begin{equation}\label{f:eigenvalues}
	\left( \frac{\sigma^2_P (A) \E(A)}{|A|^3} \right)^2 \lesssim \E_3 (A) \cdot \sum_{x,y} (A\circ A)^2 (x-y) P(x) P(y) \,. 
\end{equation}
	Similarly, for any $P\subseteq D$ one has 
\begin{equation}\label{f:eigenvalues1.5}
	\left( \frac{\sigma^2_P (A)}{|A|} \right)^2 \le \E_3 (A) \cdot \sum_{x,y} D (x-y) P(x) P(y) \,. 
\end{equation}
	In particular, 
\begin{equation}\label{f:eigenvalues2}
	|A|^6 \le \E_3 (A) \cdot \sum_{x,y} D(x-y) D(x) D(y) \,. 
\end{equation}
\label{l:eigenvalues}
\end{lemma}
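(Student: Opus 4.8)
The plan is to treat \eqref{f:eigenvalues1.5} as the core statement, to read off \eqref{f:eigenvalues2} from it as the special case $P=D$, and to obtain \eqref{f:eigenvalues} as a weighted refinement through the eigenvalue method. The reduction is immediate: since $A\circ A$ is supported on $D=A-A$, one has $\sigma_D(A)=\sum_x (A\circ A)(x)=|A|^2$, so feeding $P=D$ into \eqref{f:eigenvalues1.5} turns the left side into $(|A|^4/|A|)^2=|A|^6$ and the right side into $\E_3(A)\sum_{x,y}D(x-y)D(x)D(y)$, which is precisely \eqref{f:eigenvalues2}. Hence the real content sits in \eqref{f:eigenvalues1.5} and in upgrading it to \eqref{f:eigenvalues}.

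For \eqref{f:eigenvalues1.5} I would argue by a double Cauchy--Schwarz. Using $(A\circ A)(x)=|\{c\in A:\ c-x\in A\}|$ one rewrites $\sigma_P(A)=\sum_{x\in P}(A\circ A)(x)=\sum_{c\in A}|P\cap(c-A)|$. Cauchy--Schwarz over the $|A|$ values of $c$ gives $\sigma_P^2(A)\le|A|\sum_{c\in A}|P\cap(c-A)|^2=|A|\sum_{x,x'\in P}N(x,x')$, where $N(x,x'):=|\{c\in A:\ c-x\in A,\ c-x'\in A\}|$. The key structural point is that $N(x,x')\ne 0$ forces $x-x'=(c-x')-(c-x)\in A-A=D$, so the double sum is carried by the pairs counted in $Q:=\sum_{x,y}D(x-y)P(x)P(y)$; a second Cauchy--Schwarz therefore yields $\sum_{x,x'\in P}N(x,x')\le Q^{1/2}\big(\sum_{x,x'\in P}N(x,x')^2\big)^{1/2}\le Q^{1/2}\big(\sum_{x,x'}N(x,x')^2\big)^{1/2}$. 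Finally I would evaluate the last factor exactly: summing $N(x,x')^2$ over all $x,x'$ decouples the constraints on $x$ and on $x'$, since for each fixed $(c,c')\in A^2$ the number of admissible $x$ equals $(A\circ A)(c-c')$ and likewise for $x'$, so that $\sum_{x,x'}N(x,x')^2=\sum_{c,c'\in A}(A\circ A)^2(c-c')=\sum_s(A\circ A)^3(s)=\E_3(A)$. Chaining the two estimates gives $\sigma_P^2(A)\le|A|\,Q^{1/2}\E_3(A)^{1/2}$, and squaring and dividing by $|A|^2$ is exactly \eqref{f:eigenvalues1.5}.

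To reach the weighted inequality \eqref{f:eigenvalues} I would retain more in the second step, using $N(x,x')\le (A\circ A)(x-x')$ to replace the crude indicator $D(x-y)$ by the weight $(A\circ A)^2(x-y)$ on the right of \eqref{f:eigenvalues}; the extra factor $\E(A)/|A|^2$ should then come from the level-set normalisation $\sigma_P(A)\asymp \D|P|$ combined with the spectral input of \cite{s_mixed}, namely the bound for the largest eigenvalue of the positive-definite operator with kernel $(A\circ A)(x-y)$ compressed to $P$. I expect this to be the main obstacle. The plain double Cauchy--Schwarz above only yields the unweighted $\sigma_P^4(A)\le|A|^2\E_3(A)Q$, and extracting the weight $(A\circ A)^2$ and the factor $\E(A)$ simultaneously is exactly where a genuinely spectral argument, rather than a further Cauchy--Schwarz, is needed. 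This is also why \eqref{f:eigenvalues} is stated with $\lesssim$, absorbing the logarithmic losses of the dyadic/spectral step.
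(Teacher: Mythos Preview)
Your double Cauchy--Schwarz argument for \eqref{f:eigenvalues1.5} is correct and clean, and the specialisation $P=D$ giving \eqref{f:eigenvalues2} is fine. This is precisely the ``purely combinatorial approach'' the paper alludes to in the sentence introducing the lemma; the paper itself does not write out a proof of any of the three inequalities, it merely cites \cite[Theorem 5.1, inequality (5.7)]{s_mixed} and remarks that \eqref{f:eigenvalues1.5} and \eqref{f:eigenvalues2} admit the elementary argument you supplied.

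For \eqref{f:eigenvalues} you are candid that your argument is incomplete and that the genuine eigenvalue input from \cite{s_mixed} is required to produce the extra factor $\E(A)/|A|^2$ and the weight $(A\circ A)^2$ on the right. That assessment matches the paper exactly: the paper also defers this inequality entirely to \cite{s_mixed}. So there is no discrepancy in approach --- the paper does not offer an in-text proof to compare against --- but your write-up, like the paper's, leaves \eqref{f:eigenvalues} as a black-box import rather than a self-contained argument. If you want to close that gap you will indeed need the operator-theoretic step (bounding the top eigenvalue of the compressed convolution operator on $P$), not a further Cauchy--Schwarz.
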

	

The next lemma is a small generalization of   Exercise 1.1.8 from \cite{TV} and can be obtained using the probabilistic  method, say, combined with the Pl\"unnecke--Ruzsa inequality. 

\begin{lemma}
	Let $A,B \subseteq \Gr$ be two finite sets.
	Then there exists a set $X\subseteq A+B-B$,
	$$
	|X| \ll \frac{|A+B-B|}{|B|} \cdot \log |A+B|
	$$
	such that
	$A+B \subseteq X+B$.
	In particular, for $B=A$ one has $A+A \subseteq X+A$ and 
$$
	|X| \ll \frac{|A+A|^3}{|A|^3} \cdot \log |A| \,.
$$
	\label{l:Schoen_covering}
\end{lemma}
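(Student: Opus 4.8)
The plan is to prove this by the standard probabilistic covering argument: choose the centres of the covering translates at random from the universe $U := A+B-B$, and then greedily patch up the few points left uncovered. The key observation that sets up the framework is that for a point $s\in A+B$, the set of centres $x$ with $s\in x+B$ is exactly the fibre $C_s := s-B$; since $s\in A+B$ we have $C_s = s-B \subseteq A+B-B = U$, and $|C_s| = |B|$. Hence finding $X\subseteq U$ with $A+B\subseteq X+B$ is equivalent to finding $X\subseteq U$ that meets every fibre $C_s$, $s\in A+B$. This is precisely what makes the sharper factor $|A+B-B|/|B|$ (rather than $|A+B|/|B|$) appear: we are allowed to draw centres from all of $A+B-B$.

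For the probabilistic step I would include each $u\in U$ in a random set $X_0$ independently with probability $p$, to be fixed later. Writing $\mathbb{E}$ for the expectation, one has $\mathbb{E}\,|X_0| = p|U|$, while for each $s$ the probability that the fibre $C_s$ is missed is $(1-p)^{|B|} \le e^{-p|B|}$, so the expected number of uncovered points of $A+B$ is at most $|A+B|\, e^{-p|B|}$. Taking $p \asymp \frac{\log |A+B|}{|B|}$ makes this last quantity $O(1)$ while $\mathbb{E}\,|X_0| \ll \frac{|A+B-B|}{|B|}\log|A+B|$. For the cleanup, for each of the (expected $O(1)$) uncovered $s$ I add one arbitrary centre from $C_s\subseteq U$ to obtain $X\supseteq X_0$; then $X\subseteq U$ covers all of $A+B$, and $\mathbb{E}\,|X| \ll \frac{|A+B-B|}{|B|}\log|A+B|$. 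Fixing an outcome for which $|X|$ is at most its expectation produces the desired set.

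For the \emph{in particular} statement I would take $B=A$ and invoke the Pl\"unnecke--Ruzsa inequality quoted in Section~\ref{sec:definitions} with $n=2$, $m=1$, giving $|A+A-A| = |2A-A| \le \left(\frac{|A+A|}{|A|}\right)^{3}|A| = \frac{|A+A|^3}{|A|^2}$; combining this with $\log|A+A| \le 2\log|A|$ and substituting into the general bound yields $|X| \ll \frac{|A+A|^3}{|A|^3}\log|A|$, as claimed. The argument is essentially routine, so there is no serious obstacle; the only real content is the bookkeeping noted above --- recognising that $s-B\subseteq A+B-B$ so that the centres may be sampled from $A+B-B$, and absorbing the unavoidable logarithmic overhead through the two-stage random-then-greedy construction.
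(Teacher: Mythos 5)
Your proof is correct and follows exactly the route the paper indicates for Lemma~\ref{l:Schoen_covering}: a random choice of centres from $A+B-B$ with a greedy cleanup (the probabilistic method), combined with the Pl\"unnecke--Ruzsa inequality and $\log|A+A|\le 2\log|A|$ for the case $B=A$. The only detail worth adding is the trivial edge case $\log|A+B|>|B|$, where one must cap $p$ at $1$ (taking $X=A+B-B$ then already satisfies the claimed bound), but this is routine bookkeeping.
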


We need the famous Szemer\'edi--Trotter Theorem \cite{ST} about incidences of points and lines on the plane. 
Let us recall the definitions.
Let $\mathcal{L}$ be a finite set of lines on the Euclidean plane and $\P$ be a finite ensemble of points.  
Define the {\it number of incidences} $\mathcal{I} (\mathcal{P},\mathcal{L})$ between points and lines  as
$\mathcal{I}(\mathcal{P},\mathcal{L})=|\{(p,l)\in \mathcal{P}\times \mathcal{L} : p\in l\}|$.

\begin{theorem}\label{t:SzT}
	Let $\mathcal{P}$ be a finite set of points and let $\mathcal{L}$ be a finite set of lines.
	Then
	$$\mathcal{I}(\mathcal{P},\mathcal{L}) \ll |\mathcal{P}|^{2/3}|\mathcal{L}|^{2/3}+|\mathcal{P}|+|\mathcal{L}|\,.$$
\end{theorem}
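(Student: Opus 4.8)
The plan is to derive the incidence bound from the Crossing Lemma, which gives a short self-contained argument in place of the original cell-decomposition method. As a harmless first reduction, I would discard all lines of $\mathcal{L}$ meeting fewer than two points of $\mathcal{P}$, since such lines contribute at most $|\mathcal{L}|$ incidences in total and this term already appears on the right-hand side.

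Next I would encode the incidences as a drawn (topological) graph $G$ on the vertex set $\mathcal{P}$. For each remaining line $\ell$, I list the points of $\mathcal{P}$ lying on $\ell$ in their order along $\ell$ and join consecutive points by the straight segment between them. A line carrying $k_\ell$ points then contributes $k_\ell-1$ edges, so $G$ has $n=|\mathcal{P}|$ vertices and $e=\sum_\ell(k_\ell-1)=\mathcal{I}(\mathcal{P},\mathcal{L})-|\mathcal{L}'|\ge \mathcal{I}(\mathcal{P},\mathcal{L})-|\mathcal{L}|$ edges, where $\mathcal{L}'$ is the set of lines still present. The key geometric input is that two distinct lines cross in at most one point; hence every edge crossing in this drawing sits at an intersection of two of the lines, and the total number of crossings is at most $\binom{|\mathcal{L}|}{2}\le|\mathcal{L}|^2$.

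Applying the Crossing Lemma in the form $\mathrm{cr}(G)\gg e^3/n^2$ (valid once $e\ge 4n$) together with $\mathrm{cr}(G)\le|\mathcal{L}|^2$ yields $e^3\ll|\mathcal{P}|^2|\mathcal{L}|^2$, so $e\ll|\mathcal{P}|^{2/3}|\mathcal{L}|^{2/3}$ and therefore $\mathcal{I}(\mathcal{P},\mathcal{L})\le e+|\mathcal{L}|\ll|\mathcal{P}|^{2/3}|\mathcal{L}|^{2/3}+|\mathcal{L}|$. In the complementary regime $e<4n$ one simply bounds $\mathcal{I}(\mathcal{P},\mathcal{L})\le e+|\mathcal{L}|\ll|\mathcal{P}|+|\mathcal{L}|$. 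Adding the two cases gives the stated estimate.

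The main obstacle is the Crossing Lemma itself, which I would establish in two stages. First, Euler's formula forces $e\le 3n$ for a planar graph, so deleting one edge per crossing to planarize $G$ shows $\mathrm{cr}(G)\ge e-3n$ for every graph. To upgrade this linear bound to the cubic one, I would keep each vertex independently with probability $p$, apply the linear bound to the random induced subgraph, pass to expectations (an edge survives with probability $p^2$ and a crossing with probability $p^4$), and then optimize the resulting inequality at $p\sim n/e$. This probabilistic amplification is the one genuinely clever step; the rest is bookkeeping.
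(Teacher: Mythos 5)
Your proposal is correct: it is precisely Sz\'ekely's crossing-number proof of the Szemer\'edi--Trotter theorem, and all the steps work as you describe. Note, however, that the paper does not prove Theorem \ref{t:SzT} at all --- it imports it as a black box, citing the original paper of Szemer\'edi and Trotter, where the result is established by the cell-decomposition method: one partitions the plane into cells each meeting few lines, applies a trivial incidence bound inside each cell, and sums. So your route is genuinely different from the cited source. The trade-off is the usual one: your argument is far shorter and essentially self-contained, needing only Euler's formula $e\le 3n$ for simple planar graphs plus the probabilistic amplification to $\mathrm{cr}(G)\gg e^3/n^2$, whereas the cell-decomposition (and its modern descendant, polynomial partitioning) is heavier but more flexible --- it adapts to incidence problems with curves, to higher dimensions, and to situations where one needs the partition itself rather than just the count. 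For the purposes of this paper either proof is equally serviceable, since Theorem \ref{t:SzT} is used only as an input to Corollary \ref{c:P_energy}. Two small points that your write-up uses implicitly and could state: the graph you draw is \emph{simple} (two consecutive points on a line determine that line uniquely, so no multiple edges arise), which is required for the crossing lemma; and edges lying on the same line have disjoint interiors, so every crossing indeed comes from a pair of \emph{distinct} lines, giving the bound $\binom{|\mathcal{L}|}{2}$ on the number of crossings of the drawing, and hence on $\mathrm{cr}(G)$.
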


\section{On sums and differences of sets with small product set}
\label{sec:small_AA}

First of all, let us consider the following basic question. 
Suppose that $A$ is a finite subset of a field $\F$ and $|AA| \lesssim |A|$. 
Is it true that $|(A+A) A| \lesssim |A+A|$ or, in a similar way, $|(A-A) A| \lesssim |A-A|$?
If $A$ is a multiplicative subgroup of $\F_p$, then the answer is, obviously, positive but what if $A$ belongs to an infinite field, say, $\mathbb{R}$ 
where there are no pure nontrivial  subgroups?  
Below we will give 
an affirmative 
answer considering popular subsets of $A+A$ and $A-A$.
It is interesting that the answer to the dual question, namely, is it true that $|A+A| \lesssim |A|$ implies  $|AA+A| \lesssim |AA|$ or  $|A/A + A| \lesssim |A/A|$ is clearly negative (consider a shifted interval, e.g.).

Let $A$ be a set and put $D=A-A$, $\Pi = AA$. 
Suppose that $\D>0$ is a positive number and $P\subseteq D$ is a set such that $\D \le  r_{A-A} (x)$ for all $x\in P$. 
Then for any $x\in PA$ one has $r_{\Pi-\Pi} (x) \ge \D$ because the formula $xa = (a_1-a_2) a = a_1 a - a_2 a \in \Pi -\Pi$. 
Thus
\[
	\D |PA| \le \sum_{x\in PA} r_{\Pi-\Pi} (x) \le \sum_x r_{\Pi-\Pi} (x) = |AA|^2 = M^2 |A|^2 \,.
\]
It follows that 
\begin{equation}\label{f:PA}
	|PA| \le \frac{|AA|^{2}}{\D} = \frac{M^2 |A|^{2}}{\D} \,.
\end{equation}
Certainly, the same holds if one replaces $AA$ to $A/A$ (and even more general products as $AB$ can be used).

Now let us obtain another bound (previous logic was used in \cite{MRSS}, \cite{s_as}).
Suppose that, in addition to $\D \le r_{A-A} (x)$, that $r_{A-A} (x) \le 2\D$ on $P$ and for a certain  integer $k\ge 1$  
one has 
$\sum_{x \in P} r^k_{A-A} (x) \gtrsim \E^{+}_k (A)$. 
The previous arguments give us 
a generalization of \eqref{f:PA} 
$$
|PA| \D^k \le \sum_{x \in PA} r^k_{\Pi-\Pi} (x) \le \E^{+}_k (AA)  \,.
$$
Further by Lemma \ref{l:Schoen_covering}, we have $AA\subseteq XA$ and $|X| \lesssim M^3$. 
Hence by the norm property of the higher energies $\E_k$, see, e.g., \cite[Section 4]{s_E_k} and the definition of the set $P$, we get 
\begin{equation}\label{tmp:06.11_3} 
|PA| \D^k \le 
\E^{+}_k (AA) \le \left( \sum_{x \in X} (\E^{+}_k (xA))^{1/2k} \right)^{2k} = |X|^{2k}  \E^{+}_k (A)
\lesssim_k M^{6k} \E^{+}_k (A) 
\lesssim
\end{equation}
\begin{equation}\label{tmp:06.11_3'} 
\lesssim
M^{6k} \sum_{x \in P} r^k_{A-A} (x) \le  2^k M^{6k} |P| \D^k 
\,.
\end{equation}
It means that 
\begin{equation}\label{f:popular_PA}
|PA| \lesssim_{2^k} M^{6k} |P|  
\,.
\end{equation}
Interestingly, that we cannot replace the addition to multiplication and vice versa in \eqref{tmp:06.11_3}, \eqref{tmp:06.11_3'}.
For $k=1$ one can easily see that a slightly stronger bound takes place (compare it with \eqref{f:PA})  for any set $P$  such that $\D |P| \gg |A|^2$, 
namely,  (here we do not use any norm property)
\begin{equation}\label{f:popular_PA_1}
|PA| \ll M^{2} |P|  
\,.
\end{equation}
Finally, notice that  the same calculations take place  if one replaces $A-A$ to $A+A$ and  the energies $\E^{+}_k$ to other energies which enjoy norm properties, e.g., $\T^{+}_k$, $\E^{+}_{k,l}$ and so on, see \cite{SS}, \cite{s_E_k}. 
Also, we can consider sets with $|A/A| \le M|A|$ as well but the dependence on $M$ in \eqref{f:popular_PA} will be slightly worse in this case.

\bigskip

Calculations above allows us to show that popular difference/sumsets $P$ defined via 
sets $A$ with small $AA$ are so--called Szemer\'edi--Trotter type sets, see \cite{sh_SzT}.

\begin{corollary}
	Let $A \subset \mathbb{R} \setminus \{0\}$ be a set, $|AA| \le M|A|$ and $P$ as above. 
	Then 
\begin{equation}\label{f:P_energy'} 	
	\E^{+}_3 (P) \lesssim_{2^k} \frac{M^{12k} |P|^4}{|A|} + |P|^3 \,,
\end{equation}
	and for any set $B\subset \mathbb{R}$ one has  
\begin{equation}\label{f:P_energy''} 	
	\E^{+} (P,B) \lesssim_{2^k} \frac{M^{6k} |P|^{3/2} |B|^{3/2}}{|A|^{1/2}} + |P| |B| \,.
\end{equation}
\label{c:P_energy} 	
\end{corollary}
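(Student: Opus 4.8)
\proof (plan.)

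The plan is to deduce both \eqref{f:P_energy'} and \eqref{f:P_energy''} from the single product estimate we have just established, namely $|PA| \lesssim_{2^k} M^{6k}|P|$ (this is exactly \eqref{f:popular_PA}). Writing $N := |PA|$ and $m := |A|$, it suffices to prove the two clean Szemer\'edi--Trotter type bounds $\E^{+}_3 (P) \lesssim N^2 |P|^2 / m + |P|^3$ and $\E^{+}(P,B) \lesssim N |P|^{1/2}|B|^{3/2}/m^{1/2} + |P||B|$ valid for any set $P$ having small product $PA$ with an auxiliary large set $A$, and then to substitute $N \lesssim_{2^k} M^{6k}|P|$; this is precisely what identifies $P$ as a set of the type studied in \cite{sh_SzT}.

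First I would reduce everything to a single estimate on popular differences. For a threshold $\tau>0$ put $T_\tau := \{ t : r_{P-P}(t) \ge \tau \}$. The central step is the bound
\[
|T_\tau| \lesssim \frac{N^2 |P|}{m \tau^2} + \frac{N}{\tau} \,.
\]
Granting this, the two energy estimates follow by a routine dyadic decomposition of $r_{P-P}$: for \eqref{f:P_energy'} one sums $\E^{+}_3(P) = \sum_t r^3_{P-P}(t) \lesssim \sum_{\tau} \tau^3 |T_\tau|$ over dyadic $\tau \le |P|$ (the $+|P|^3$ coming from the diagonal term $t=0$), while for \eqref{f:P_energy''} one writes $\E^{+}(P,B) = \sum_t r_{P-P}(t) r_{B-B}(t)$, localizes to a level $r_{P-P}(t) \sim \tau$, and uses Cauchy--Schwarz in the form $\sum_{t \in T_\tau} r_{B-B}(t) \le |T_\tau|^{1/2} \E^{+}(B)^{1/2} \le |T_\tau|^{1/2} |B|^{3/2}$. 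A sanity check on the exponents: taking $P = A$ with $N = M|A|$ and $m = |A|$ recovers exactly $\E^{+}_3(A) \lesssim M^2 |A|^3$, i.e. Lemma \ref{l:E_3}, so the popular--difference bound is the natural generalization of that lemma from ``$PP$ small'' to ``$PA$ small''.

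The heart of the matter, and the main obstacle, is proving the popular--difference bound itself by Szemer\'edi--Trotter (Theorem \ref{t:SzT}). The naive attempt --- recording, for $t \in T_\tau$, the $\ge \tau$ pairs $(p,p')$ with $p-p' = t$ as points of $P \times P$ lying on the line $x - y = t$, and then dilating by $a \in A$ into the small set $PA \times PA$ --- fails, because all of these lines (before and after dilation) have slope $1$, and Szemer\'edi--Trotter is vacuous for parallel lines. The fix is to let the multiplicative action of $A$ supply the missing directions: since $|PA| = N$ the multiplicative energy is large, $\E^{\times}(P,A) \ge (|P| m)^2 / N$, and one must arrange the incidences in a mixed additive/multiplicative plane so that the relevant lines acquire $\sim m$ distinct slopes drawn from $A$. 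With such a configuration the $|\mathcal{P}|^{2/3}|\mathcal{L}|^{2/3}$ term of Theorem \ref{t:SzT} produces the main term $N^2|P|/(m\tau^2)$ and the linear terms produce $N/\tau$. Alternatively one may invoke directly the energy estimates for Szemer\'edi--Trotter type sets from \cite{sh_SzT}, applied to $P$, $A$ with parameter $M^{6k}$.

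Finally I would note that the whole computation goes through verbatim with $A-A$ replaced by $A+A$, and with $\E^{+}_k$ replaced by any higher energy enjoying the norm property already exploited in \eqref{tmp:06.11_3}--\eqref{f:popular_PA}, so that the hypotheses imposed on $P$ are exactly the ones we need. $\hfill\square$
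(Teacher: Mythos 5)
Your overall architecture coincides with the paper's: both bounds are reduced, by dyadic summation, to a single popular-difference estimate, with \eqref{f:popular_PA} as the only input about $P$, and that estimate is to be proved by Theorem \ref{t:SzT}. But there is a genuine gap exactly at the point you yourself flag as ``the heart of the matter'': the central bound $|T_\tau| \lesssim N^2|P|/(m\tau^2) + N/\tau$ is never proved, and it is not what Szemer\'edi--Trotter gives. The standard Elekes-type configuration --- which is the paper's, and which also disposes of your ``all lines have slope $1$'' worry, since one takes points $A^{-1}\times S_\tau$ and lines $y=\pi x-b$ with $(\pi,b)\in PA\times B$, so the slopes range over all of $PA$ --- yields only
\[
|T_\tau| \ \ll\ \frac{|PA|^2|P|^2}{|A|\tau^3} + \frac{|PA|\,|P|}{|A|\tau}\,,
\]
weaker than your claim by a factor $|P|/\tau\ge 1$ in the main term. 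Moreover, your claim cannot hold in the generality in which you need it (``any set $P$ having small product $PA$''): taking $P=A$, so $N=M|A|$ and $m=|A|$, and summing $\tau^2|T_\tau|$ dyadically, it would give $\E^{+}(A)\lesssim M^2|A|^2\log|A|$, which would resolve the few-products--many-sums problem outright and supersede Theorem \ref{t:main_energy} (the exponent $22/9$ is the whole point of the paper). Your sanity check against Lemma \ref{l:E_3} passes only because the cube $\tau^3$ hides the discrepancy; testing the same bound against $\E^{+}$ exposes it. In effect the level-set bound has been reverse-engineered from the desired conclusions rather than derived from Theorem \ref{t:SzT}.

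The second, independent, flaw is in the reduction of \eqref{f:P_energy''}: even if you insert the \emph{correct} Szemer\'edi--Trotter bound displayed above, the route $\E^{+}(P,B)=\sum_t r_{P-P}(t)r_{B-B}(t)$ followed by Cauchy--Schwarz with $\E^{+}(B)\le |B|^3$ does not recover the stated inequality --- optimizing the dyadic sum against the trivial bound $|T_\tau|\le |P|^2/\tau$ gives at best $M^{3k}|P|^{3/2}|B|^{3/2}/|A|^{1/4}$, losing a factor $|A|^{1/4}$ (your plan is internally consistent only with the inflated level-set bound). This loss is precisely why the paper never works with the symmetric quantity $r_{P-P}$ at all: it proves the asymmetric estimate \eqref{tmp:18.10_1} for the level sets of $r_{P-B}$ directly, with $B$ built into the family of lines $\mathcal{L}$ indexed by $PA\times B$, after which \eqref{f:P_energy'} (taking $B=P$, the diagonal contributing the $+|P|^3$, as you correctly note) and \eqref{f:P_energy''} both follow from the same summation. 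The repair is therefore to prove \eqref{tmp:18.10_1} in this asymmetric form and drop both the inflated exponent and the Cauchy--Schwarz detour.
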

\begin{proof}
	If $P=\{0\}$, then there is nothing to prove. 
	Let $\tau \ge 1$ be a real number.
	It is enough to 
	obtain 
	for a certain  $\tau \gg  1$ 
\begin{equation}\label{tmp:18.10_1}
	|\{ s ~:~  |\{ p - b = s ~:~  p\in P,\, b\in B \} | \ge \tau \}| \lesssim_{2^k} \frac{M^{12k} |B|^2 |P|^2}{|A|\tau^3} 
\end{equation}
	after that bounds \eqref{f:P_energy'}, \eqref{f:P_energy''} follow via simple summation. 
	Denote by $S_\tau$ the set from \eqref{tmp:18.10_1}  and our task is to find the required upper bound for cardinality of $S_\tau$. 
	We have 
\[
	\tau |S_\tau| |A| 
		\le 
			|\{ \pi a^{-1} - b = s ~:~  \pi \in PA,\, b\in B,\, s\in S_\tau,\, a\in A  \} | \,.
\]
	We interpret the last equation as points/lines incidences. 
	Here $\mathcal{P} = A^{-1} \times  S_\tau$ and lines from $\mathcal{L}$ are indexed by  coefficients  $(\alpha, \beta)$ from $PA \times B$. 
	Applying Theorem \ref{t:SzT},  we see that 
\[
	\tau |S_\tau| |A| \ll (|A| |S_\tau| |PA| |B|)^{2/3} + |S_\tau| |A| + |B| |PA| \,.
\] 
	If the first term dominates, then inequality \eqref{f:popular_PA} gives the required estimate \eqref{tmp:18.10_1} because $|PA| \lesssim_{2^k} M^{6k} |P|$. 
	The second term cannot be the largest one because then $\tau \ll 1$. 
	It remains to consider the case when the third term dominates.
	Then we should have 
\[
	\frac{|B| |PA|}{|A|} \gg \frac{|PA|^2 |B|^2}{|A| \tau^2}
\]
	because otherwise there is nothing to prove.	
	But clearly, $\tau \le \min \{|P|, |B|\}$ and hence choosing the absolute constant in $O(\cdot)$ to be large enough we arrive to a contradiction.  
	This completes the proof. 
$\hfill\Box$
\end{proof}

\section{Applications}
\label{sec:proof}

Now we are ready to prove Theorem \ref{t:main_intr} from the Introduction.

\begin{theorem}
	Let $A\subset \mathbb{R}$ be a finite set such that $|AA| \le M|A|$. 
	Then 
	\begin{equation}\label{f:main_energy}	
	\E^{+} (A) \lesssim M^{\frac{7}{3}} |A|^{\frac{22}{9}} \,.
	\end{equation}
	\label{t:main_energy}
\end{theorem}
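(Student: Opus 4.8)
The plan is to run the eigenvalue method on the most popular level set of the convolution $A\circ A$, feeding into it two a priori bounds: the standard third--moment estimate of Lemma~\ref{l:E_3} and the new ``small product'' estimate for energies of level sets from Corollary~\ref{c:P_energy}. First I would dyadically decompose $\E^{+}(A)=\sum_x (A\circ A)^2(x)$ and pigeonhole a single height $\Delta$ with level set $P=\{x : \Delta<(A\circ A)(x)\le 2\Delta\}$ carrying the bulk of the energy, so that $\E^{+}(A)\lesssim \Delta^2|P|$ and $\sigma_P(A)\approx \Delta|P|$. This $P$ is exactly of the type treated in Section~\ref{sec:small_AA} with $k=2$ (it captures $\E^{+}_2(A)$), so Corollary~\ref{c:P_energy} applies.

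Next I would extract two relations. Since $\sum_{x\in P}(A\circ A)^3(x)\approx \Delta^3|P|\le \E^{+}_3(A)$, Lemma~\ref{l:E_3} gives $\Delta^3|P|\lesssim M^2|A|^3$; combined with $\Delta^2|P|\approx\E^{+}(A)$ this produces the crucial lower bound
\[
|P|\gtrsim \frac{\E^{+}(A)^3}{M^4|A|^6}\,.
\]
On the other hand, the eigenvalue inequality \eqref{f:eigenvalues}, after substituting $\sigma_P(A)\approx\Delta|P|$ and $\Delta^2|P|\approx\E^{+}(A)$, reads
\[
\frac{\E^{+}(A)^4|P|^2}{|A|^6}\lesssim \E^{+}_3(A)\cdot W,\qquad W:=\sum_{x,y}(A\circ A)^2(x-y)P(x)P(y)\,.
\]

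The core step is bounding the weighted energy $W=\sum_z (A\circ A)^2(z)\,(P\circ P)(z)$. Here I would apply H\"older to split $W\le \E^{+}_3(A)^{2/3}\,\E^{+}_3(P)^{1/3}$ and control the factors separately: $\E^{+}_3(A)\lesssim M^2|A|^3$ by Lemma~\ref{l:E_3}, and $\E^{+}_3(P)\lesssim M^{24}|P|^4/|A|$ by the new bound \eqref{f:P_energy'} with $k=2$ (the competing term $|P|^3$ only improves the outcome and is disposed of as a separate, easier case). This is precisely where the small--product hypothesis enters, through the fact that $P$ is a Szemer\'edi--Trotter type set. Assembling the pieces, the eigenvalue inequality takes the shape $\E^{+}(A)^4|P|^{2/3}\lesssim M^{34/3}|A|^{32/3}$; plugging in the lower bound $|P|\gtrsim \E^{+}(A)^3/(M^4|A|^6)$ collapses this to $\E^{+}(A)^6\lesssim M^{14}|A|^{44/3}$, i.e. $\E^{+}(A)\lesssim M^{7/3}|A|^{22/9}$, as claimed.

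The main obstacle is the middle step: getting a bound on $W$ that is strong enough. Naive estimates—for instance $W\le |A|\,\E^{+}(A,P)$, or using only $\sum_z(P\circ P)(z)=|P|^2$—lose too much and yield an exponent even worse than $49/20$; the improvement over Theorem~\ref{t:main_intr-} comes exactly from combining the H\"older reduction to $\E^{+}_3(P)$ with the small--product estimate \eqref{f:P_energy'}, together with exploiting the sharp lower bound on $|P|$ supplied by the third--moment inequality $\Delta^3|P|\lesssim \E^{+}_3(A)$. The pleasant cancellation is that the large power $M^{24}$ in \eqref{f:P_energy'} is cube--rooted and recombines, through the sixth root at the end, into the modest factor $M^{7/3}$.
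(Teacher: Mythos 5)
Your proposal is correct and is essentially the paper's own argument: the same dyadic pigeonholing to extract the level set $P$, the same eigenvalue inequality \eqref{f:eigenvalues}, the same H\"older split of the weighted sum into $(\E^{+}_3(A))^{2/3}(\E^{+}_3(P))^{1/3}$, and the same inputs from Lemma \ref{l:E_3} and Corollary \ref{c:P_energy} with $k=2$. The only cosmetic difference is that you carry the third--moment information as a lower bound $|P|\gtrsim \E^{+}(A)^3/(M^4|A|^6)$ while the paper keeps it as the upper bound $\D\ll M^2|A|^3/\E^{+}(A)$ and substitutes at the end; the two are equivalent and yield the identical exponent $M^{7/3}|A|^{22/9}$.
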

\begin{proof} 
Let $P$ be a subset of $A-A$ such that for all $x\in P$ one has $\D < r_{A-A} (x) \le 2\D$ and $\sum_{x \in P} r^2_{A-A} (x) \gg \E^{+} (A) / \log |A|$. 
The existence of $P$ easily follows from the dyadic pigeon--holing  principle. 
From Lemma \ref{l:E_3} (up to a logarithm) or without logarithms, see \cite[Lemma 3.7 or the proofs of Theorems 5.1, 5.4]{s_mixed}, one has 
\begin{equation}\label{f:D_upper}
	\D \ll \frac{M^2 |A|^3}{\E^{+} (A)}
\end{equation}
Applying Lemma \ref{l:eigenvalues} and the definition of the set $P$, we have  
\[
	\left( \frac{\D^2 |P|^2 \E^{+}(A)}{|A|^3} \right)^2 
	\le \left( \frac{\sigma^2_P (A) \E^{+}(A)}{|A|^3} \right)^2 
	\lesssim  \E^{+}_3 (A) \cdot \sum_{x,y} r^2_{A-A} (x-y) P(x) P(y) 
	= 
\]
\[
	=
		\E^{+}_3 (A) \cdot \sum_z r^2_{A-A} (z) r_{P-P} (z) \,. 
\]
Using the H\"older inequality, we obtain
\[
	\left( \frac{\D^2 |P|^2 \E^{+}(A)}{|A|^3} \right)^6 
	\lesssim 
		(\E^{+}_3 (A))^5 \E_3^{+} (P) \,.
\]
In view of Lemma \ref{l:E_3} and Corollary \ref{c:P_energy} as well as our choice of $P$ and $k=2$, we have
\[
	\left( \frac{\D^2 |P|^2 \E^{+}(A)}{|A|^3} \right)^6 
\lesssim 
	|A|^{14} M^{34} |P|^4 \,.
\]
Hence, using \eqref{f:D_upper}, we get 
\[
	(\E^{+}(A))^{14} \lesssim M^{34} |A|^{32} \D^4 \ll  M^{34} |A|^{32}  \left(\frac{M^2 |A|^3}{\E^{+} (A)} \right)^4 
\]
and hence
\[
	\E^{+} (A) \lesssim M^{7/3} |A|^{22/9}
\]
as required.
$\hfill\Box$
\end{proof}

\bigskip 

In the same vein  we obtain Theorem \ref{t:main_intr'}. 
Let $D=A-A$ and $S=A+A$.
Choose $P\subseteq D$ such that $\sigma_P (A) \gtrsim |A|^2$ and  for a certain $\D$ one has  $\D < r_{A-A} (x) \le 2\D$ on $P$. 
Using inequality \eqref{f:eigenvalues1.5} of Lemma \ref{l:eigenvalues}, we get 
\[
	|A|^6 \lesssim \E^{+}_3 (A) \sum_{z\in D} r_{P-P} (z) 
\]
and hence by the H\"older inequality 
\[
	|A|^{18} \lesssim (\E^{+}_3 (A))^3 \E^{+}_3 (P) |D|^2 \,.
\]
Applying Lemma \ref{l:E_3}, 
combined with bound \eqref{f:popular_PA_1}, we derive
\begin{equation}\label{tmp:06.11.2019_1}
	|A|^{10} \lesssim M^{10} |D|^6
\end{equation}
as required. 
It is interesting that our bound \eqref{tmp:06.11.2019_1} coincides with the classical sum--product estimate of Elekes \cite{Elekes2} up to logarithms.

\bigskip

Similarly, by the proof of \cite[Theorem 11, inequality (4.9)]{sh_SzT}, we have 
\[
	|A|^{10} \lesssim |S|^2  \E^{+}_3 (A) \sum_{z} r^2_{A-A} (z) r_{S'-S'} (z) \,,
\]
where $S' \subseteq  \{ x ~:~ r_{A+A}(x) \ge |A|^2/(2|S|) \}$ and $\D < r_{A+A} (x) \le 2\D$ on $S'$. 
Using the H\"older inequality, we get 
\[
	|A|^{30} \lesssim |S|^6  (\E^{+}_3 (A))^5 \E^{+}_3 (S') \,.
\]
Applying Lemma \ref{l:E_3} and bound \eqref{f:popular_PA_1},
we derive
\[
	|A|^{16} \lesssim M^{14} |S|^{10} \,.
\]
This completes the proof of Theorem \ref{t:main_intr'}. 
$\hfill\Box$


\section{General problem}
\label{sec:general_problem}

Given a set $A\subseteq \F$ one can consider a general problem about 
finding good estimates for  
rational expressions $R(A)$ in terms of the sumsets and the product set of the set $A$. 
Namely, putting $K=|A+A|/|A|$ and $M=|AA|/A$ we can ask to 
seek 
a bound for cardinality of $R(A)$ of  the form $|R(A)| \ll_{K,M} |A|$. 
First such results were obtained in \cite{BKT}. 
For $R(A) = nA-mA$ or $R(A) = A^n/A^m$, where $n,m$ are positive integers such estimate exists and the corresponding  statement is called the Pl\"unnecke--Ruzsa inequality
as we have discussed in Section \ref{sec:definitions}. 
Moreover, thanks to the sum--product phenomenon \cite{BKT}, \cite{TV}, we know that in many fields $\F$ the following holds  $KM\gg |A|^c$, $c>0$ and hence a bound $|R(A)| \ll_{K,M} |A|$ trivially takes place (for large powers of $K$ and $M$).
Thus we need to specify here the dependence on $K$ and $M$. 
We can  suppose  that $R(A)$ simultaneously includes addition (subtraction) and multiplication (division) and hence it is naturally  to assume that the power of $K$ and $M$ in the presumable bound is at least one. Thus we have arrived to the following problem which we formulate for definiteness in the case of the simplest polynomial $R(x,y,z) = x(y+z)$.

\bigskip

{\bf Problem.} Suppose that  $A$ is a finite subset of 
$\mathbb{R}$ or suppose that $A$  is a sufficiently small set 
belonging to 
$\F_p$. 
Let $K=|A+A|/|A|$ and $M=|AA|/|A|$. Is it true that 
\begin{equation}\label{f:problem_K}
	|A(A+A)| \ll_M K |A| ?
\end{equation}

As we have seen in Section \ref{sec:small_AA} the answer to the dual question, namely, is it true that  $|AA+A| \ll_K M|A|$ is negative.  
Further if inequality \eqref{f:problem_K} takes place, then by the Cauchy--Schwarz inequality one has 
\begin{equation}\label{f:problem_K_r}
	\sum_{x} r^2_{A(A+A)} (x) \gg_M \frac{|A|^5}{K} \,.
\end{equation}
It is easy to see that a stronger form of bound \eqref{f:problem_K_r} follows from \eqref{f:popular_PA}, \eqref{f:popular_PA_1}.

\begin{proposition}
	Let $A\subseteq \F$ and $\eps \in \{-1,1\}$. 
	Then
\begin{equation}\label{f:r_1}
\sum_{x} r^2_{A^{\eps} (A\pm A)} (x) \gtrsim \frac{|A|^8 }{|AA^\eps|^2 |A\pm A|} \,,
\end{equation}
	and 
\begin{equation}\label{f:r_2}
	\sum_{x} r^2_{A^{\eps} (A\pm A)} (x) \gtrsim_{|AA^\eps|/|A|} (\E^{+}_{3/2} (A))^2 \,.
\end{equation}
\end{proposition}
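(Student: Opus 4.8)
The plan is to read $r_{A^{\eps}(A\pm A)}(x)$ as the \emph{weighted} representation function, $r_{A^{\eps}(A\pm A)}(x)=|\{(a,b,c)\in A^3:\ a^{\eps}(b\pm c)=x\}|$; this is the normalisation for which \eqref{f:problem_K_r} is the Cauchy--Schwarz shadow of \eqref{f:problem_K}, and for it $\sum_x r_{A^{\eps}(A\pm A)}(x)=|A|^3$. Both \eqref{f:r_1} and \eqref{f:r_2} will be deduced from one inequality valid for every dyadic level set $P=\{x\in A\pm A:\ \D<r_{A\pm A}(x)\le 2\D\}$, namely
\[
\sum_x r^2_{A^{\eps}(A\pm A)}(x)\ \ge\ \frac{|A|^2|P|^2\D^3}{|AA^{\eps}|^2}\,,
\]
after which the two estimates differ only in the choice of $P$.

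To establish the displayed inequality I would keep only the triples with $b\pm c\in P$. As $r_{A\pm A}>\D$ on $P$, this restriction gives the pointwise bound $r_{A^{\eps}(A\pm A)}(x)\ge \D\, r_{A^{\eps}P}(x)$ with $P$ now regarded as a set, whence $\sum_x r^2_{A^{\eps}(A\pm A)}(x)\ge \D^2\,\E^{\times}(A^{\eps},P)$. The Cauchy--Schwarz inequality supplies $\E^{\times}(A^{\eps},P)\ge (|A|\,|P|)^2/|A^{\eps}P|$, and the computation leading to \eqref{f:PA}, run with the product $AA^{\eps}$ and with $A\pm A$ in place of $A-A$ (a substitution the text already declares admissible), yields $|A^{\eps}P|\le |AA^{\eps}|^2/\D$. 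Multiplying the three inequalities produces the core bound.

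For \eqref{f:r_1} I would take the heaviest level. Writing $A\pm A$ as a disjoint union of $O(\log|A|)$ dyadic levels, $\sum_j \D_j|P_j|\asymp\sum_x r_{A\pm A}(x)=|A|^2$, so some level satisfies $\D|P|\gtrsim|A|^2$. For that level the trivial bound $|P|\le|A\pm A|$ forces $\D\gtrsim|A|^2/|A\pm A|$, and hence $|P|^2\D^3=(\D|P|)^2\,\D\gtrsim|A|^6/|A\pm A|$; inserting this into the core bound gives exactly \eqref{f:r_1}. I expect this to be the delicate point: one must keep $|AA^{\eps}|$ explicit rather than absorb it into a power of $M$, and it is only here, through $|P|\le|A\pm A|$, that the arithmetic structure of the sumset (and not merely that of an abstract level set) enters.

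For \eqref{f:r_2} I would instead select the level maximising $\sum_{x\in P}r^{3/2}_{A\pm A}(x)\asymp|P|\D^{3/2}$; pigeonholing over the $O(\log|A|)$ levels makes this quantity $\gtrsim\E^{+}_{3/2}(A)$. The core bound then reads $\sum_x r^2_{A^{\eps}(A\pm A)}(x)\ge |A|^2(|P|\D^{3/2})^2/|AA^{\eps}|^2\gtrsim_{|AA^{\eps}|/|A|}(\E^{+}_{3/2}(A))^2$, the surviving factor $|AA^{\eps}|^2/|A|^2$ being precisely the polynomial dependence on $M=|AA^{\eps}|/|A|$ permitted by $\gtrsim_M$. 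The only care needed is that in the $+$ case $\E^{+}_{3/2}(A)$ be read through $A+A$, so that it matches the level decomposition; all remaining steps are identical to the difference case.
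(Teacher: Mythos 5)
Your proof is correct. For \eqref{f:r_1} it is essentially the paper's own argument: the paper likewise takes a dyadic level set $P$ of $r_{A\pm A}$ carrying mass $\gtrsim|A|^2$ (with $\D\ge\D_*=|A|^2/(2|A\pm A|)$), uses the pointwise bound $r_{A^{\eps}(A\pm A)}\ge \D\,r_{A^{\eps}P}$ followed by Cauchy--Schwarz, and controls $|PA^{\eps}|$ by the same ``$\Pi\pm\Pi$'' observation; the only cosmetic difference is that the paper packages that control as \eqref{f:popular_PA_1} (namely $|PA^{\eps}|\lesssim M^2|P|$, valid once $\D|P|\gtrsim|A|^2$), whereas you keep the raw form \eqref{f:PA} (namely $|PA^{\eps}|\le|AA^{\eps}|^2/\D$) and only insert $\D|P|\gtrsim|A|^2$ and $\D\gtrsim|A|^2/|A\pm A|$ at the end --- the same computation in a different order. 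The genuine divergence is in \eqref{f:r_2}: the paper obtains it by invoking \eqref{f:popular_PA} with $k=3/2$, i.e.\ $|PA^{\eps}|\lesssim M^{9}|P|$, whose proof uses the covering Lemma \ref{l:Schoen_covering} and the norm property of the fractional energy $\E^{+}_{3/2}$, plus the extra step $\D|P|\le|A|^2$ to convert $\D^2|A|^2|P|$ into $\D^3|P|^2\gtrsim(\E^{+}_{3/2}(A))^2$; you instead feed the $k=3/2$-heaviest level into the same elementary core inequality $\sum_x r^2_{A^{\eps}(A\pm A)}(x)\ge|A|^2|P|^2\D^3/|AA^{\eps}|^2$, which rests on \eqref{f:PA} alone. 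That buys a single unified inequality proving both parts and a better explicit dependence on $M=|AA^{\eps}|/|A|$ (a factor $M^{-2}$ rather than the $M^{-9}$ of the paper's route); both are admissible since \eqref{f:r_2} claims only $\gtrsim_M$. Your two side remarks are also accurate: the weighted, triple-counting reading of $r_{A^{\eps}(A\pm A)}$ is indeed the one the paper uses (it is forced by the factor $\D^2$ in \eqref{tmp:06.11_2}), and the need to interpret $\E^{+}_{3/2}$ via $r_{A+A}$ in the plus case is a gloss the paper shares, its \eqref{f:popular_PA} being stated for $A-A$ and extended to $A+A$ only by remark.
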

\begin{proof} 
	Take $\D_* = |A|^2/(2|A\pm A|)$. 
	Using the pigeonhole principle, find $\D \ge \D_*$ and a set $P = \{ x ~:~ \D < r_{A\pm A} (x) \le 2\D\}$ such that $\sum_{x\in P} r_{A\pm A} (x) \gtrsim |A|^2$.
	Applying \eqref{f:popular_PA_1},
	we obtain $|PA^{\eps}| \lesssim M^2 |P|$, where $M=|AA^\eps|/|A|$. 
	Using the Cauchy--Schwarz inequality, we get 
$$ 
	\E^\times (P,A^{\eps}) \ge \frac{|A|^2 |P|^2}{|PA^{\eps}|} \gtrsim  \frac{|A|^2 |P|}{M^2} 
$$
	and multiplying the last estimate by $\D^2$, we arrive to
\begin{equation}\label{tmp:06.11_2}
	\sum_{x} r^2_{A^{\eps}(A+A)} (x) \ge \D^2 \E^\times (P,A^{\eps}) \ge  \frac{|A|^2 \D^2 |P|}{M^2} \gtrsim   \frac{|A|^4 \D_*}{M^2} \ge \frac{|A|^6 }{2M^2 |A\pm A|}   
\end{equation}
as required.
	To obtain \eqref{f:r_2} just use estimate \eqref{tmp:06.11_2} and inequality \eqref{f:popular_PA} with $k=3/2$. 
This completes the proof. 
$\hfill\Box$
\end{proof}


\begin{remark}
	With some efforts one can clean the logarithms in \eqref{f:r_1}, using the same scheme of the proof and more accurate but rather lengthy combinatorial computations. 
	We leave it for the interested reader, preferring to have a short proof with slightly worse estimates.  
\end{remark}


Estimates \eqref{f:r_1}, \eqref{f:r_2} are sharp as one can see taking $A$ with small product set $AA$.
Now we obtain another lower bound for $\sum_{x} r^2_{A(A+A)} (x)$ which is sharp, in contrary, for sets with small sumset $A+A$. 

\begin{proposition}
	Let $A, B\subseteq \F$ be finite sets. 
	Then 
\[
	|A/B| \cdot \sum_{s} r^2_{(A\pm B)/B} (s) \ge \E^{+} (A,B)^2 \,.
\]
\end{proposition}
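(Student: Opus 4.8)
The plan is to expand the additive energy as a weighted sum indexed by the ratio set $A/B$ and then finish with a single Cauchy--Schwarz inequality; the argument is exact and handles both signs in parallel. I assume throughout that $0\notin B$, so that $A/B$ and division by elements of $B$ are well defined.

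First I would record the elementary identity that results from fixing one $A$-element together with one $B$-element and counting the remaining pair. For the $+$ sign, grouping the quadruples by $(a_2,b_2)$ gives
$$\E^{+}(A,B)=\sum_{(a_2,b_2)\in A\times B} r_{A+B}(a_2+b_2),$$
while for the $-$ sign, rewriting $a_1+b_1=a_2+b_2$ as $a_1-b_2=a_2-b_1$ and grouping by $(a_2,b_1)$ gives $\E^{+}(A,B)=\sum_{(a_2,b_1)} r_{A-B}(a_2-b_1)$. In both cases the summation is over one element of $A$ divided by one element of $B$, which is precisely the structure I want to exploit.

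Next I would pass to the variable $t=a_2/b_2\in A/B$ (respectively $t=a_2/b_1$). Since $0\notin B$, the map $(a_2,b)\mapsto(t,b)=(a_2/b,\,b)$ is a bijection onto $\{(t,b):b\in B,\ tb\in A\}$, and the algebraic identity $a_2\pm b=(t\pm1)b$ converts the argument of $r_{A\pm B}$ into a dilate by $b$. Collecting the terms with a common value of $t$ then yields
$$\E^{+}(A,B)=\sum_{t\in A/B} W(t),\qquad W(t):=\sum_{b\in B:\ tb\in A} r_{A\pm B}((t\pm1)b),$$
where the outer sum ranges over at most $|A/B|$ values of $t$.

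Finally I apply Cauchy--Schwarz to this $|A/B|$-term sum, obtaining $\E^{+}(A,B)^2\le|A/B|\sum_{t}W(t)^2$, and it remains to dominate $\sum_t W(t)^2$ by $\sum_s r^2_{(A\pm B)/B}(s)$. This is where the division set $(A\pm B)/B$ appears: relaxing the constraint $tb\in A$ and summing over all of $B$ only enlarges $W(t)$, and the unrestricted sum is exactly a representation function, namely $\sum_{b\in B} r_{A\pm B}((t\pm1)b)=r_{(A\pm B)/B}(t\pm1)$. Hence $W(t)\le r_{(A\pm B)/B}(t\pm1)$, and as $t$ runs over the distinct elements of $A/B$ the shifted values $t\pm1$ are also distinct, so $\sum_t W(t)^2\le\sum_s r^2_{(A\pm B)/B}(s)$. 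Combining the two displayed steps gives the proposition. I expect the only delicate point to be the bookkeeping in the reparametrization step: checking that $(a_2,b)\mapsto(a_2/b,b)$ is genuinely a bijection onto the stated index set (this is exactly where $0\notin B$ is used) and that the shift $a_2\pm b=(t\pm1)b$ matches the sign appearing in $(A\pm B)/B$. Once these are in place, the Cauchy--Schwarz and the relaxation of the constraint are routine, and no logarithmic factors are lost.
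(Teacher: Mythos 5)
Your proof is correct and is essentially the paper's own argument in different clothing: your quantity $W(t)$ is exactly the number of energy triples $(a_i,b_i,b_j)$ on which the paper's map $f(x,y,z)=(x\pm y)/z$ takes the value $t\pm 1$ (via the identity $(a_i\pm b_i)/b_j=a_j/b_j\pm 1$), and both proofs finish with the same Cauchy--Schwarz over the at most $|A/B|$ admissible values followed by relaxing the restricted count to the full representation function $r_{(A\pm B)/B}$. Note that both arguments (yours explicitly, the paper's implicitly) read $r_{(A\pm B)/B}(s)$ as counting triples in $A\times B\times B$, which is the convention the paper fixes in Section \ref{sec:definitions}.
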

\begin{proof}
	Take $s\in A \pm  B$.
	Then there are $n(s)$ pairs $(a_i,b_i) \in A \times B$  such that $s=a_i \pm b_i$,  $i \in \{1,\dots, n(s) \}$.
	Clearly, $\sum_s n^2 (s) = \E^{+} (A,B)$. 
	Consider the map  $\_phi : A\pm B \to 2^{A\times B \times B}$ defined as $\_phi(s) = \{ (a_i,b_i,b_j) ~:~ i, j\in \{1,\dots, n(s) \} \}$ and thus for any $s$, we have $|\_phi (s)| = n^2 (s)$. 
	Obviously, $\_phi(s)  = \_phi(s')$ implies that  $s=s'$ and $i=i'$, $j=j'$.
	Hence there are  $\E^{+} (A,B)$ such triples $(a_i,b_i,b_j)$.
	Further 
\[
	\frac{a_i \pm b_i}{b_j} = \frac{s}{b_j} = \frac{a_j \pm b_j}{b_j} = \frac{a_j}{b_j} \pm 1 \in \frac{A}{B} \pm 1  
\]
	and thus the image of the function $f(x,y,z) = (x \pm y)/z$ on our triples has cardinality at most $|A/B|$. 
	By the Cauchy--Schwarz inequality, we see that
	\[
		\sum_{s} r^2_{(A\pm B)/B} (s) \ge \sum_{\a \in \F} |\{ x\in A,\, y,z \in B ~:~ f(x,y,z) = \a \}|^2 \ge \E^{+} (A,B)^2 / |A/B|  
	\]
	as required. 
$\hfill\Box$
\end{proof}

\bigskip

Let us make a final observation. 
As we have seen at the beginning of  Section \ref{sec:small_AA} if $P = \{ s\in A\pm A ~:~ r_{A\pm A} (s) \ge \D \}$, then $\D |AP| \le M^2 |A|^2$. 
In other words, popular sets (in terms of $r_{A\pm A} (s)$) have small product or ratio with $A$. 
Interestingly,  that if we put now 
$$\t{P} = \{ s\in A\pm A ~:~ \exists  x, y \in A,\, x \pm  y = s,\, r_{A/A} (x / y) \ge \D \} \,,$$ 
i.e. $\t{P}$ is popular in terms of ratios, then a similar bound takes  place.
Indeed, put $\t{\Lambda} = \{\lambda \in A/A ~:~ r_{A/A} (\lambda) \ge \D \}$. 
We have $a(b\pm c) = ab(1\pm \frac{c}{b})$ and hence $|A\t{P}| \le |AA(1 \pm \t{\Lambda})|$. 
But, clearly, the map $\_phi : AA(1  \pm  \t{\Lambda}) \to AA/A \times (A\pm A)$ defined as $\_phi (x) = (\pi(x)/c(x), b(x) \pm c(x))$, where for $x\in AA(1 \pm  \t{\Lambda})$  we have put $\pi(x) \in AA$ and 
$b(x)/c(x) = \lambda(x) \in \t{\Lambda}$ 
is injective (consider the product of its coordinates). 
Thus by the Pl\"unnecke--Ruzsa inequality, we get
\[
	\D |A\t{P}| \le \D |AA(1+\t{\Lambda})| \le |AA/A| |A \pm A| \le |AA|^3 |A \pm A| / |A|^2 \le M^3 |A| |A \pm A|  
\]
as required.

\bigskip

\noindent{Olmezov K.I.\\
	Steklov Mathematical Institute,\\
	ul. Gubkina, 8, Moscow, Russia, 119991}\\
and 
\\
MIPT, \\ 
Institutskii per. 9, Dolgoprudnii, Russia, 141701\\
{\tt olmezov.ki@gmail.com}

\bigskip

\noindent{Semchankau A.S\\
	Steklov Mathematical Institute,\\
	ul. Gubkina, 8, Moscow, Russia, 119991}\\
{\tt aliaksei.semchankau@gmail.com}

\bigskip

\noindent{I.D.~Shkredov\\
Steklov Mathematical Institute,\\
ul. Gubkina, 8, Moscow, Russia, 119991}
\\
and
\\
IITP RAS,  \\
Bolshoy Karetny per. 19, Moscow, Russia, 127994\\
and 
\\
MIPT, \\ 
Institutskii per. 9, Dolgoprudnii, Russia, 141701\\
{\tt ilya.shkredov@gmail.com}

\end{document}